\newtheorem{theorem}{{\sc Theorem}}[section]
\newtheorem{cor}[theorem]{{\sc Corollary}}
\newtheorem{lemma}[theorem]{{\sc Lemma}}
\theoremstyle{remark}
\newtheorem{remark}[theorem]{{\sc Remark}}
\theoremstyle{definition}
\newcommand{\R}{\mathbb{R} }
\newcommand{\N}{\mathbb{N} }
\newcommand{\A}{\mathcal{A}}
\newcommand{\B}{\mathcal{B}}
\newcommand{\F}{\mathcal{F}}
\newcommand{\G}{\mathcal{G}}
\newcommand{\D}{\mathcal{D}}
\newcommand{\W}{\mathcal{W}}
\newcommand{\K}{\mathcal{K}}
\newcommand{\Prob}{\mathbb{P}}
\newcommand{\E}{\mathbb{E}}
\newcommand{\Om}{\Omega}
\providecommand{\babs}[1]{\bigl\lvert #1\bigr\rvert}
\providecommand{\Babs}[1]{\Bigl\lvert #1\Bigr\rvert}
\DeclareMathOperator{\Var}{Var}
\DeclareMathOperator{\Cov}{Cov}
\DeclareMathOperator{\Lip}{Lip}
\renewcommand{\phi}{\varphi}
\renewcommand{\epsilon}{\varepsilon}
\renewcommand{\rho}{\varrho}
\renewcommand{\P}{\Prob}
\begin{document}
\title[Berry-Esseen bound]{The Berry-Esseen bound in de Jong's CLT}
\author{Christian D\"obler}
\thanks{\noindent Mathematisches Institut der Heinrich Heine Universit\"{a}t D\"usseldorf\\
Email: christian.doebler@hhu.de\\
{\it Keywords: Berry-Esseen bounds, de Jong's CLT; degenerate $U$-statistics; fourth moment theorems; normal approximation; exchangeable pairs; Stein's method  } }
\begin{abstract}  
We prove a Berry-Esseen bound in de Jong's classical CLT for normalized, completely degenerate $U$-statistics, which says that the convergence of the fourth moment sequence to three and a Lindeberg-Feller type negligibility condition are sufficient for asymptotic normality. Our bound is of the same optimal order as the bound on the Wasserstein distance to normality that has recently been proved by D\"obler and Peccati (2017).   
\end{abstract}

\maketitle

\section{Introduction}\label{intro}
\subsection{Motivation and overview}
Let $p$ be a fixed positive integer and suppose that, for each $n\in\N$, the random variable $W_n$ is a normalized, completely degenerate and not necessarily symmetric $U$-statistic of order $p$, based on independent random variables $X_1,\dotsc,X_n$ (see Section \ref{mainresult} for precise definitions), defined on a probability space that might vary with $n$.
Henceforth, we will sometimes just refer to such a quantity as $W_n$ as a (normalized) \textit{degenerate $U$-statistic of order $p$}. 

In the seminal paper \cite{deJo90} (see also \cite{deJo87, deJo89}), P. de Jong proved the following remarkable CLT: If 
\begin{equation}\label{fmcond}
\lim_{n\to\infty}\E[W_n^4]=3
\end{equation}
and a Lindeberg-Feller type negligibility condition for the sequence $(W_n)_{n\in\N}$ is satisfied (see again Section \ref{mainresult} for a precise statement), then $(W_n)_{n\in\N}$ converges in distribution to a standard normal random variable $Z$.

In view of the typically non-normal limiting distributions of (symmetric) degenerate $U$-statistics with a fixed kernel \cite{Serf80, RubVi80, Greg77, DynMan83}, which does not depend on the sample size $n$, de Jong's theorem is a quite surprising result. Moreover, the class of degenerate $U$-statistics is rather large, containing for example the so-called \textit{homogeneous sums} or \textit{homogeneous multilinear forms} in independent centered real random variables with unit variance (see Section \ref{mainresult}).  \medskip

In the recent paper \cite{DP17}, G. Peccati and the author applied the \textit{exchangeable pairs approach} within \textit{Stein's method} \cite{St72, St86}
to prove a quantitative version of de Jong's purely qualitative statement, by providing an explicit error bound on the \textit{Wasserstein distance}
\[d_\W(W_n,Z):=\sup_{h\in\Lip(1)}\babs{\E[h(W_n)]-\E[h(Z)]}\]
 between the distribution of such a normalized, degenerate $U$-statistic $W_n$ and the standard normal distribution of $Z$. Here, $\Lip(1)$ is the class of all Lipschitz functions on $\R$ with Lipschitz constant $1$. The Wasserstein bound from \cite{DP17} reads
\begin{align}\label{wassbound}
d_\W(W_n,Z) &\leq \Biggl(\sqrt{\frac{2}{\pi}}+\frac{4}{3}\Biggr)\sqrt{\babs{\E[W_n^4]-3}}+\sqrt{\kappa_p}\Biggl(\sqrt{\frac{2}{\pi}}+ \frac{2\sqrt{2}}{\sqrt{3}}\Biggr)\rho_{n},
\end{align}
where the quantity $\rho_{n}$, which encodes the Lindeberg-Feller condition, is defined in Section \ref{mainresult} below and where $\kappa_p$ is a finite combinatorial constant that only depends on $p$.

The goal of the present note is to complement the Wasserstein bound \eqref{wassbound} with a \textit{Berry-Esseen bound}, that is, a bound on the \textit{Kolmogorov distance} 
\[d_\K(W_n,Z):=\sup_{t\in\R}\babs{\P(W_n\leq t)-\P(Z\leq t)}=\sup_{t\in\R}\babs{\P(W_n\leq t)-\Phi(t)}\]
between the distribution of $W_n$ and the standard normal distribution, which is of the same order as the bound \eqref{wassbound}. Here, and in what follows,
\[\Phi(t)=\frac{1}{\sqrt{2\pi}}\int_{-\infty}^t e^{-x^2/2}dx, \quad t\in\R,\]
denotes the standard normal distribution function. From a statistical viewpoint, error bounds on the Kolmogorov distance are usually more informative and useful than bounds on the Wasserstein distance. For instance, if $W_n$ is the test statistic of an asymptotic test, then $d_\K(W_n,Z)$ is the maximal error that arises from working with the standard normal quantiles instead of the true ones of $W_n$. However, when applying Stein's method, it is in general considerably more difficult to obtain sharp bounds on the Kolmogorov distance than on the Wasserstein distance. This is roughly because the solutions to the Stein equation for the Kolmogorov distance lack one order of smoothness as compared to those for the Wasserstein distance. Although, for a standard normal $Z$ and 
 for an integrable real random variable $X$, on has the general inequality 
\[d_\K(X,Z)\leq\sqrt{d_\W(X,Z)},\]    
this inequality usually does not yield sharp bounds since, for a normal limit, the actual rates of convergence in the Kolmogorov distance and in the Wasserstein distance are typically the same. 

As in \cite{DP17}, the proof of our Berry-Esseen result relies on a combination of the exchangeable pairs approach in Stein's method with the theory of \textit{Hoeffding decompositions} \cite{Hoeffding} of arbitrary functionals on product probability spaces. In particular, we rely here on several crucial identities and bounds that were proved in \cite{DP17} in the context of Hoeffding decompositions and exchangeable pairs (see again Section \ref{mainresult} for details).
However, in place of Stein's classical exchangeable pairs bound on the Wasserstein distance to normality (see \cite[Lecture 3, Theorem 1]{St86}, we employ here a recent bound on the Kolmogorov distance to normality from \cite{ShZh}.

\subsection{Further related references}
In addition to the bound on the Wasserstein distance, the work \cite{DP17} also provided quantitative multivariate extensions of de Jong's CLT for vectors of such degenerate $U$-statistics. In particular, for vectors of degenerate $U$-statistics with pairwise different orders, these error bounds entail conditions for the multivariate CLT to hold that are equivalent to the conglomeration of the conditions for the univariate CLTs for the individual components to be valid. The related paper \cite{DP18b} complemented the quantitative CLT from \cite{DP17} by proving analogous error bounds for the (centered) Gamma approximation of such a degenerate $U$-statistic and the very recent paper \cite{DKP22a} even provided multivariate functional versions of de Jong type CLTs. We refer to these references for pointers to the relevant literature, example cases and possible applications. \smallskip

From a modern perspective, de Jong's CLT can be considered the historically first instance of a so-called \textit{fourth moment theorem}, which generally states that a normalized sequence $(W_n)_{n\in\N}$ of real random variables is asymptotically normally distributed, if \eqref{fmcond} is satisfied, possibly up to imposing some further negligibility condition that sometimes cannot be dispensed with. In particular, de Jong's result preceded the remarkable uni- and multivariate fourth moment theorems for Gaussian Wiener chaos \cite{NuaPec05,NouPec09a,PeTu}, for multiple Wiener-It\^{o} integrals on Poisson spaces \cite{DP18a, DP18b, DVZ}, for multiple integrals with respect to a general Rademacher sequence \cite{DK19, Zheng} as well as for eigenfunctions of Markov diffusion operators \cite{ledoux, ACP,CNPP}.   
\smallskip

As it turned out, the methodology developed in \cite{DP17} is rather flexible and has been successfully adapted to prove error bounds on the (multivariate) normal approximation in other situations as well: In the paper \cite{DP19}, by combining the general approach from \cite{DP17} with a new formula for the product of two symmetric, degenerate $U$-statistics, new error bounds on the normal approximation of symmetric (not necessarily degenerate) $U$-statistics with possibly sample size dependent kernels were proved. These analytical bounds only involve powers of the sample size $n$ and integral norms of so-called \textit{contraction kernels} associated to the symmetric $U$-statistic. These techniques were further refined in the reference \cite{DKP22b}, where we proved general functional CLTs for symmetric $U$-statistics with sample size dependent kernels. In the paper \cite{Doe23} the methodology from \cite{DP19} was further extended in order to prove accurate Wasserstein bounds on the normal approximation of quite arbitrary (symmetric and non-symmetric) functionals on product spaces.  \medskip

The remainder of this paper is structured as follows. In Section \ref{mainresult} we introduce the technical framework and state the main result of this work, Theorem \ref{maintheo}, as well as a corollary dealing with symmetric $U$-statistics. The proof of Theorem \ref{maintheo} is given in Section \ref{proof}.

\section{Framework and main result}\label{mainresult}
In what follows, let $p\leq n$ be positive integers and suppose that $X_1,\dotsc,X_n$ are independent random variables on a probability space $(\Om,\F,\P)$, assuming values in the respective measurable spaces  
$(E_1,\mathcal{E}_1),\dotsc,(E_n,\mathcal{E}_n)$. Further, let
\begin{equation*}
 f:\prod_{j=1}^n E_j\rightarrow\R\quad\text{be}\quad\bigotimes_{j=1}^n\mathcal{E}_j-\B(\R)\text{ - measurable}
\end{equation*}
in such a way that 
\[W:=W_n:=f(X_1,\dotsc,X_n)\in L^1(\P).\]
Then, as is well-known (see e.g.\cite{vitale, Major}), $W$ has a $\P$-a.s. unique decomposition, the \textit{Hoeffding decomposition}, of the form
\begin{equation}\label{HD}
W=\sum_{J\subseteq [n]} W_J,
\end{equation}  
where we write $[n]:=\{1,\dotsc,n\}$ and where the $W_J$, $J\subseteq[n]$, are random variables with the following properties:
\begin{enumerate}[(a)]
\item For each $J\subseteq[n]$ $W_J$ is $\sigma(X_i,i\in J)$-measurable. 
\item For all $J,K\subseteq[n]$ one has that $\E[W_J\mid X_i,i\in K]=0$ unless $J\subseteq K$. 
\end{enumerate}
Note that (a) implies that there are $\bigotimes_{j\in J}\mathcal{E}_j-\B(\R)$-measurable \textit{kernel functions} $\psi_J:\prod_{j\in J}E_j\rightarrow\R$ such that $W_J=\psi_J(X_i,i\in J)$, $J\subseteq[n]$. Here, and in what follows, the arguments of $\psi_J$ are plugged in according to the usual ascending order on $[n]$. Since the summands in \eqref{HD} are explicitly given by
\[W_J=\sum_{L\subseteq J}(-1)^{|J|-|L|}\,\E\bigl[W\,\bigl|\,X_i,i\in L\bigr], \quad J\subseteq[n],\]
one infers immediately that $W\in L^q(\P)$ implies that  $W_J\in L^q(\P)$ for all $J\subseteq[n]$, where $q\in[1,\infty]$. If $W\in L^2(\P)$, then its individual \textit{Hoeffding components} $W_J$, $J\subseteq[n]$, are automatically uncorrelated, making the Hoeffding decomposition a very useful tool for the analysis of variances of functionals on product spaces.

In the above setting, the random variable $W$ is called a \textit{completely degenerate, not necessarily symmetric $U$-statistic of order $p$} or \textit{degenerate $U$-statistic} for short, if its Hoeffding decomposition \eqref{HD} has the form 
\begin{equation}\label{HDdeg}
W=\sum_{J\in\D_p(n)}W_J= \sum_{J\in\D_p(n)}\psi_J(X_i,i\in J),
\end{equation} 
where we let 
\[\D_p(n):=\{J\subseteq[n]\,:\, |J|=p\}\]
denote the collection of $p$-subsets of $[n]$ , that is, if $W_K=0$ $\P$-a.s. unless $|K|=p$. In \cite{deJo90} such a $W$ is also referred to as a \textit{generalized multilinear form} since this class contains the class of random variables $Y$ of the form 
\[Y=\sum_{J\in\D_p(n)} a_J\prod_{i\in J} Y_i,\]
where $Y_1,\dotsc,Y_n$ are independent and centered real random variables and $(a_J)_{J\in\D_p(n)}$, is a family of real coefficients. Such random variables $Y$ are called \textit{homogeneous multilinear forms} or \textit{homogeneous sums} in the literature. \smallskip

If, in fact, the underlying random variables $X_1,\dotsc,X_n$ are i.i.d. and, in particular, assume values in the same space $(E,\mathcal{E})$ and the kernels $\psi_J$, $J\in\D_p(n)$, of $W$ as in \eqref{HDdeg} are all equal to the same symmetric function $\psi:E^p\rightarrow\R$, which might still depend on the sample size $n$, then $W$ is a called a \textit{completely degenerate symmetric $U$-statistic of order $p$}. Note that the symmetric kernel $\psi$ is then \textit{canonical} in the sense that 
\[\int_E \psi(x_1,\dotsc,x_{p-1},y)d\mu(y)=0\,\quad (x_1,\dotsc,x_{p-1})\in E^{p-1},\]
where $\mu$ denotes the distribution of $X_1$.  \smallskip

From now on, we will assume that $W\in L^4(\P)$ is a completely degenerate, not necessarily symmetric $U$-statistic of order $p\geq1$ and with the Hoeffding decomposition \eqref{HDdeg}, based on $X_1,\dotsc,X_n$. 
Then, $\E[W]=0$ and we will further assume that $\Var(W)=1$. Moreover, we will write 
\begin{align*}
\sigma_J^2:=\E[W_J^2]=\Var(W_J),\quad J\in\D_p(n),\quad\text{and}\quad \rho_n^2:=\max_{1\leq i\leq n} \sum_{J\in\D_p(n): i\in J}\sigma_J^2.
\end{align*} 
Note that since, by orthogonality of the Hoeffding components, 
\[1=\Var(W)=\sum_{J\in\D_p(n)} \sigma_J^2,\]
$\rho_n^2$ measures the \textit{maximal influence} that an individual random variable $X_i$ can possibly have on the total variance of $W$. We will refer to $\rho_n^2$ as a \textit{Lindeberg-Feller type quantity}.

The purpose of this note is to complement the bound \eqref{wassbound} with the following bound on the Kolmogorov distance between the law of $W$ and the standard normal distribution.

\begin{theorem}\label{maintheo}
Let $W\in L^4(\P)$ be a completely degenerate, not necessarily symmetric $U$-statistic of order $p$, based on the independent random variables $X_1,\dotsc,X_n$, where $p\leq n$, and suppose that $\Var(W)=\E[W^2]=1$. Then, it holds that 
\begin{align*}
\sup_{t\in\R}\babs{\P(W\leq t)-\Phi(t)}&
\leq 11.9\sqrt{\babs{\E[W^4]-3}} +\bigl(3.5+10.8\sqrt{\kappa_p}\bigr)\rho_n,
\end{align*} 
where $\kappa_p\in (0,\infty)$ is a combinatorial constant that only depends on $p$.
\end{theorem}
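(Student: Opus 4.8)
The plan is to follow the same strategy as in \cite{DP17}, replacing the Stein bound on the Wasserstein distance by the exchangeable pairs bound on the Kolmogorov distance due to Shao and Zhang \cite{ShZh}. First I would recall the construction of the exchangeable pair $(W,W')$ from \cite{DP17}: given the independent sample $X_1,\dotsc,X_n$, draw an independent uniform index $I\in[n]$ and an independent copy $X_I^*$ of $X_I$, set $X_i' = X_i$ for $i\neq I$ and $X_I'=X_I^*$, and let $W'=f(X_1',\dotsc,X_n')$. Then $(W,W')$ is exchangeable, and by the Hoeffding structure \eqref{HDdeg} one has the key linearity-type identity $\E[W'-W\mid X_1,\dotsc,X_n] = -\frac{p}{n}W$, i.e. the pair is $\lambda$-linear with $\lambda = p/n$. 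This is exactly the identity exploited in \cite{DP17}, so I would quote it rather than reprove it.

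Next I would invoke the Shao--Zhang Kolmogorov bound for $\lambda$-linear exchangeable pairs, which controls $d_\K(W,Z)$ by a sum of terms of the shape $\E\bigl|1 - \frac{1}{2\lambda}\E[(W'-W)^2\mid \F]\bigr|$ (the ``variance-proxy'' term), a third-moment-type term $\frac{1}{\lambda}\E|W'-W|^3$ or its conditional analogue, and a boundary term involving $\frac{1}{\lambda}\E\bigl[(W'-W)^2 \mathbf{1}_{\{|W'-W|>\cdot\}}\bigr]$ or a term of the form $\frac{1}{\lambda}\sqrt{\E[(W'-W)^4]}$. The whole point is that \cite{DP17} already estimated precisely these quantities: there it was shown (using the Hoeffding decomposition machinery and the combinatorial constant $\kappa_p$) that $\E\bigl[(1 - \frac{n}{2p}\E[(W'-W)^2\mid\F])^2\bigr]$ is bounded by a constant multiple of $|\E[W^4]-3| + \kappa_p\rho_n^2$, and that $\frac{n}{p}\E|W'-W|^3$ and the analogous fourth-moment quantity are bounded by constant multiples of $\sqrt{|\E[W^4]-3|} + \sqrt{\kappa_p}\,\rho_n$ (these are the estimates that produced the two summands in \eqref{wassbound}). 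So I would simply feed those same bounds into the Shao--Zhang inequality, apply Cauchy--Schwarz to convert the $L^2$ control of the variance-proxy term into an $L^1$ bound, and collect constants.

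The main obstacle is purely one of bookkeeping the explicit constants: the Shao--Zhang bound comes with its own numerical prefactors (and a free parameter in the truncation level that one optimizes), and these must be composed with the explicit constants coming from the Hoeffding-decomposition estimates in \cite{DP17} to arrive at the stated $11.9$, $3.5$ and $10.8\sqrt{\kappa_p}$. Concretely: the $\sqrt{|\E[W^4]-3|}$ terms receive contributions both from the third/fourth-moment terms in Shao--Zhang and from $\sqrt{\cdot}$ of the $L^2$ variance-proxy estimate, and similarly each source contributes a $\rho_n$-term; one has to add these up carefully, choose the truncation parameter, and verify that the resulting numerical constants are no larger than those claimed. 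There is no conceptual difficulty — every analytic estimate is already available from \cite{DP17} and \cite{ShZh} — but the constant-tracking has to be done with care, in particular making sure that the application of Cauchy--Schwarz (which introduces the square root and hence changes an $\eta^2$-type bound into an $\eta$-type bound) is the step responsible for the Kolmogorov rate matching the Wasserstein rate rather than being its square root. Finally, the corollary for symmetric $U$-statistics follows by specializing $\rho_n$ and $\kappa_p$ to the i.i.d., common-kernel case exactly as in \cite{DP17}.
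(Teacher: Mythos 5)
Your starting point coincides with the paper's: the exchangeable pair of \cite{DP17} with the linear regression property \eqref{linreg}, the Kolmogorov bound of \cite{ShZh}, and the Cauchy--Schwarz/variance treatment of the term $\E\babs{1-\tfrac{n}{2p}\E[(W'-W)^2\mid W]}$ exactly as in \eqref{pr2}. The gap is in your treatment of the remaining term. The bound of \cite{ShZh} that makes this strategy work has as its second term $\tfrac{n}{p}\E\babs{\E[|W'-W|(W'-W)\mid W]}$, i.e.\ the \emph{conditional} quantity, not $\tfrac{n}{p}\E|W'-W|^3$ and not a truncation/$\sqrt{\E[(W'-W)^4]}$ term: exchangeable-pair Berry--Esseen bounds with a plain $\tfrac1\lambda\E|W'-W|^3$ term require $|W'-W|$ bounded (not the case here), and a term $\tfrac{n}{p}\sqrt{\E[(W'-W)^4]}$ is of order $\sqrt{n/p}$ times the right-hand side, hence useless. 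So the claim that ``\cite{DP17} already estimated precisely these quantities'' fails for the one term that matters: the Wasserstein proof controls $\tfrac{n}{p}\E|W'-W|^3$ (that is where the second summand of \eqref{wassbound} comes from), but the conditional term is a different object, and the crude bound via conditional Jensen only gives $\tfrac{n}{p}\E\babs{\E[|W'-W|(W'-W)\mid W]}\le\tfrac{n}{p}\E[(W'-W)^2]=2$.

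Estimating that conditional term is the actual content of the paper and is not mere constant bookkeeping. One writes $n\E[|W'-W|(W'-W)\mid X]=\sum_{i=1}^n\E[\theta(D_i)\mid X]$ with $\theta(x)=x|x|$ and $D_i=W^{(i)}-W$, uses that each $\theta(D_i)$ is symmetric (so the $L^1$ norm is dominated by a standard deviation), and then has to control the $n(n-1)$ covariances $\Cov\bigl(\E[\theta(D_i)\mid X],\E[\theta(D_j)\mid X]\bigr)$. This requires the new identity \eqref{pr11}, $\E[\theta(D_i)\theta(D_j)]=\tfrac14\E\bigl[(\theta(D_i^{(j)})-\theta(D_i))(\theta(D_j^{(i)})-\theta(D_j))\bigr]$, proved by exchanging $X_j\leftrightarrow Y_j$ and $(X_i,X_j)\leftrightarrow(Y_i,Y_j)$, a Taylor-type inequality for $\theta$, and the further observation that each $D_i$ is itself a completely degenerate $U$-statistic of order $p$ based on $n+1$ variables, so that auxiliary exchangeable pairs $(D_i,D_i')$ can be constructed and Lemma \ref{le2} together with the new Lemma \ref{le3} (derived from Lemmas 2.2 and 2.7 of \cite{DP17}, but not part of the Wasserstein argument) can be applied to them. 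Without these ingredients your plan stalls at the second Shao--Zhang term, so as written the proposal has a genuine missing idea rather than just unoptimized constants.
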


For a symmetric, completely degenerate $U$-statistic $W\in L^4(\P)$, using that $\rho_n^2=p/n$ and that, as we have observed in the recent article \cite{Doe23}, the choice $\kappa_p=2p$ is possible in this case, we directly infer the following result. 
\begin{cor}\label{cor1}
Suppose that $W\in L^4(\P)$ is a completely degenerate, symmetric $U$-statistic of order $p$, based on the independent random variables $X_1,\dotsc,X_n$, where $p\leq n$, and suppose that $\Var(W)=\E[W^2]=1$. Then, one has the bound
\begin{align*}
\sup_{t\in\R}\babs{\P(W\leq t)-\Phi(t)}&
\leq 12\sqrt{\babs{\E[W^4]-3}} +19\frac{p}{\sqrt{n}}.
\end{align*} 
\end{cor}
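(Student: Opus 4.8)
The plan is to specialize Theorem \ref{maintheo} to the symmetric setting and to simplify the resulting constants. First I would record why $\rho_n^2=p/n$ in this case. Since $W$ is a \emph{symmetric} degenerate $U$-statistic, the variables $X_1,\dotsc,X_n$ are i.i.d.\ and all Hoeffding kernels coincide with one fixed symmetric canonical function $\psi\colon E^p\to\R$, so that $W_J=\psi(X_i,i\in J)$ for every $J\in\D_p(n)$. As the $X_i$ are i.i.d., the ordered tuples $(X_i)_{i\in J}$ all have the same law, whence $W_J\stackrel{d}{=}W_{J'}$ and in particular $\sigma_J^2=\sigma_{J'}^2$ for all $J,J'\in\D_p(n)$. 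Because there are $\binom{n}{p}$ such subsets and $\sum_{J\in\D_p(n)}\sigma_J^2=\Var(W)=1$, each $\sigma_J^2$ equals $\binom{n}{p}^{-1}$, and therefore, for every $i\in[n]$,
\[
\sum_{J\in\D_p(n)\,:\,i\in J}\sigma_J^2=\binom{n-1}{p-1}\binom{n}{p}^{-1}=\frac{p}{n},
\]
so that $\rho_n^2=p/n$ and $\rho_n=\sqrt{p}/\sqrt{n}$.

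Next I would invoke the fact, established in \cite{Doe23}, that for symmetric degenerate $U$-statistics the combinatorial constant $\kappa_p$ appearing in Theorem \ref{maintheo} may be taken to be $\kappa_p=2p$, hence $\sqrt{\kappa_p}=\sqrt{2}\,\sqrt{p}$. Substituting both of these into the bound of Theorem \ref{maintheo} yields
\[
\sup_{t\in\R}\babs{\P(W\leq t)-\Phi(t)}\leq 11.9\sqrt{\babs{\E[W^4]-3}}+\bigl(3.5+10.8\sqrt{2}\,\sqrt{p}\bigr)\frac{\sqrt{p}}{\sqrt{n}}=11.9\sqrt{\babs{\E[W^4]-3}}+\frac{3.5\sqrt{p}+10.8\sqrt{2}\,p}{\sqrt{n}}.
\]

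Finally I would clean up the constants. Since $p\geq1$ one has $\sqrt{p}\leq p$, so $3.5\sqrt{p}+10.8\sqrt{2}\,p\leq\bigl(3.5+10.8\sqrt{2}\bigr)p<19\,p$, using $10.8\sqrt{2}<15.28$; combined with $11.9<12$ this gives the asserted bound. There is no genuine obstacle here: the statement is a direct corollary, and the only two points meriting a word of justification are the identity $\rho_n^2=p/n$, verified above, and the admissible choice $\kappa_p=2p$, which is imported from \cite{Doe23}.
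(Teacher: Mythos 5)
Your proposal is correct and follows exactly the route the paper intends: Corollary \ref{cor1} is obtained by specializing Theorem \ref{maintheo} with $\rho_n^2=p/n$ and the choice $\kappa_p=2p$ from \cite{Doe23}, then bounding $3.5\sqrt{p}+10.8\sqrt{2}\,p\leq 19p$ (using $\sqrt{p}\leq p$) and $11.9\leq 12$. Your explicit verification of $\rho_n^2=\binom{n-1}{p-1}\binom{n}{p}^{-1}=p/n$ is a welcome but routine addition to what the paper simply asserts.
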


\begin{remark}\label{mtrem}
 \begin{enumerate}[(a)]
  \item The bound in Theorem \ref{maintheo} is of the same order as the Wasserstein bound \eqref{wassbound}. As can be seen from simple examples, like sums of independent symmetric Rademacher random variables, it is sharp, in general. 
  \item As has been shown in \cite[Theorem 1.6]{DK19} in the context of homogeneous sums based on independent symmetric Rademacher random variables, for $p\geq2$ one cannot, in general, dispense with the Lindeberg-Feller type condition $\lim_{n\to\infty}\rho_n^2=0$ to obtain a CLT, i.e. the fourth moment condition $\lim_{n\to\infty}\E[W_n^4]=3$ alone is not sufficient.
 \end{enumerate}

\end{remark}

\section{Proof of Theorem \ref{maintheo}}\label{proof}
In this section we will prove Theorem \ref{maintheo} by employing a recent Berry-Esseen bound for exchangeable pairs taken from \cite{ShZh}. We first recall the construction of the exchangeable pair $(W,W')$ from \cite{DP17}: \medskip

Let $Y=(Y_1,\dotsc,Y_n)$ be an independent copy of $X=(X_1,\dotsc,X_n)$ and suppose that $\alpha$ is a uniformly distributed random index with values in $[n]$, which is also independent of $X$ and $Y$. Then, letting $X':=(X_1',\dotsc,X_n')$ be given by $X_i'=X_i$ for $i\not=\alpha$ and $X_i'=Y_i$ for $i=\alpha$, we have that the pair $(X,X')$ is exchangeable, i.e. has the same distribution as the pair $(X',X)$. Recalling that $W=f(X)=f(X_1,\dotsc,X_n)$ is a functional of $X$, we can thus let $W':=f(X')=f(X_1',\dotsc,X_n')$ and obtain an exchangeable pair $(W,W')$ of real random variables. In \cite[Lemma 2.3]{DP17} it was shown that the pair $(W,W')$ satisfies \textit{Stein's linear regression property} with $\lambda=p/n$, i.e. 
\begin{equation}\label{linreg}
 \E\bigl[W'-W\,\bigl|\,W\bigr]=\E\bigl[W'-W\,\bigl|\,X\bigr]=-\frac{p}{n}W\,.
\end{equation}  
For the proof of Theorem \ref{maintheo} we will need the following further auxiliary results from \cite{DP17}. 

\begin{lemma}[Lemma 2.11 of \cite{DP17}]\label{le1} 
 For the above constructed exchangeable pair we have 
 \begin{equation*}
\Var\Bigl(\frac{n}{2p}\E\bigl[(W'-W)^2\,\bigl|\,X\bigr]\Bigr)\leq \E\bigl[W^4\bigr]-3+\kappa_p\rho_n^2,
\end{equation*}
where $\kappa_p\in(0,\infty)$ only depends on $p$.
\end{lemma}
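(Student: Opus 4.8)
\emph{Plan.} The argument has two ingredients: an explicit formula for $\E[(W'-W)^2\mid X]$ in terms of the Hoeffding components, and a comparison of its variance with the fourth cumulant $\E[W^4]-3$. First I would make the conditional expectation explicit. Since $W'$ is obtained from $W$ by resampling the single, uniformly chosen coordinate $\alpha$, conditioning on $\{\alpha=i\}$ gives
\[
\frac{n}{2p}\,\E\bigl[(W'-W)^2\mid X\bigr]=\frac{1}{2p}\sum_{i=1}^n\E\bigl[(W-W^{(i)})^2\mid X\bigr],
\]
where $W^{(i)}$ (resp. $W_J^{(i)}$) denotes $W$ (resp. $W_J$) with $X_i$ replaced by the independent copy $Y_i$. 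Because $W_J^{(i)}=W_J$ for $i\notin J$, one has $W-W^{(i)}=\sum_{J\ni i}(W_J-W_J^{(i)})$; inserting \eqref{HDdeg}, using complete degeneracy and property (b) — in particular $\E[W_J^{(i)}\mid X]=0$ when $i\in J$, since then $J\not\subseteq[n]\setminus\{i\}$ — and expanding the square (the cross term vanishes), one finds
\[
\E\bigl[(W-W^{(i)})^2\mid X\bigr]=\Bigl(\sum_{J\in\D_p(n):\,i\in J}W_J\Bigr)^{2}+\E\Bigl[\Bigl(\sum_{J\in\D_p(n):\,i\in J}W_J^{(i)}\Bigr)^{2}\mid X\Bigr].
\]
Summing over $i$ and separating in each square the diagonal ($J=K$) from the off-diagonal ($J\neq K$) part yields a decomposition $\tfrac{n}{2p}\E[(W'-W)^2\mid X]=\tfrac12\sum_{J}W_J^2+R+T$, with $R=\tfrac{1}{2p}\sum_i\sum_{J\ni i}\E[(W_J^{(i)})^2\mid X]$ and with $T$ collecting the off-diagonal terms, all indexed by ordered pairs $(J,K)$ of \emph{distinct} $p$-sets with $1\le|J\cap K|\le p-1$. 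By \eqref{linreg} one has $\E[(W'-W)^2]=\tfrac{2p}{n}\,\E[W^2]=\tfrac{2p}{n}$, so the left-hand side has mean $1$; since $\E[\tfrac12\sum_JW_J^2]=\E[R]=\tfrac12$ and $\E[T]=0$, the task reduces to bounding the variance.

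Next I would expand this variance — equivalently, $\E\bigl[\bigl(\tfrac{n}{2p}\E[(W'-W)^2\mid X]\bigr)^2\bigr]-1$ — entirely in terms of the $W_J$ and their resampled versions $W_J^{(i)}$, and in parallel expand $\E[W^4]=\E[(\sum_JW_J)^4]$ as a sum over ordered quadruples of $p$-sets, classified by how the index sets coincide (property (b) kills the ``$3+1$'' pattern). The point is that the two expansions have the same shape: the quadruples splitting into two equal pairs produce, on the fourth-moment side, the Gaussian value $3$ (using $\sum_{J\neq K}\sigma_J^2\sigma_K^2=1-\sum_J\sigma_J^4$) together with the bulk of the variance, while the quadruples with at least three distinct index sets — the genuinely non-Gaussian ones, such as contraction/cycle configurations — reappear on the variance side as well, with smaller combinatorial multiplicity. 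Hence $\Var\bigl(\tfrac{n}{2p}\E[(W'-W)^2\mid X]\bigr)\le\E[W^4]-3+(\text{remainder})$, and I would bound the remainder by $\kappa_p\rho_n^2$. The remainder comes from three controllable sources, each handled by the same mechanism: (i) the self-interaction / diagonal corrections, which amount to sums like $\sum_J\sigma_J^4\le\rho_n^2$; (ii) the resampling operation, under which a cross-expectation changes only through the variance ``carried'' by the replaced coordinate, hence by at most $\rho_n^2$; and (iii) the combinatorial weights $|J\cap K|$ (arising from $\sum_i\mathbf{1}_{\{i\in J\cap K\}}$) versus the weight $1$ in $\E[W^4]$, which differ only for pairs with $1\le|J\cap K|\le p-1$, and regrouping those along a shared index again costs a factor $\rho_n^2$. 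Throughout, moments of products $W_JW_K$ of Hoeffding components with overlapping supports are estimated by the Cauchy--Schwarz inequality, which is where $W\in L^4(\P)$ (hence $W_J\in L^4(\P)$) is used, and the number of admissible configurations of each type is bounded by a constant depending only on $p$ — this is where $\kappa_p$ is born. Finally, the variances of $\tfrac12\sum_JW_J^2+R$ and of $T$ are combined via $\Var(A+B)\le(\sqrt{\Var A}+\sqrt{\Var B})^2$, the cross term being absorbed into the constant by Young's inequality.

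The main obstacle is precisely this combinatorial comparison: one has to enumerate the overlap patterns of tuples of $p$-subsets of $[n]$, recognise in every ``non-Gaussian'' pattern that some index is forced to lie in at least two of the sets (the quantitative form of de Jong's observation), check that the corresponding terms of the variance reproduce those of $\E[W^4]-3$ up to discrepancies of one of the three types above, and bound the multiplicity of each configuration by a function of $p$ alone. All the algebraic identities for Hoeffding components under the resampling map, together with the moment estimates for their products, are available from \cite{DP17}, on which this argument ultimately rests.
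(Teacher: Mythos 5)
The paper you are reviewing against does not actually prove this statement: it is imported verbatim as Lemma 2.11 of \cite{DP17}, so the only proof to compare with is the one given there. Your plan follows the same route as that proof in spirit --- write $\frac{n}{2p}\E[(W'-W)^2\mid X]$ explicitly through the Hoeffding components (equivalently, through the Hoeffding decomposition of $W^2$, which is how \cite{DP17} organizes it, cf.\ its Lemma 2.7 quoted later in this paper), expand $\E[W^4]$ over quadruples of $p$-subsets classified by overlap pattern, match the paired configurations against the Gaussian value $3$, and absorb all discrepancies into $\kappa_p\rho_n^2$. The preliminary steps you do carry out are correct: the cross term in $\E[(W-W^{(i)})^2\mid X]$ vanishes because $\E[W_K^{(i)}\mid X]=\E[W_K\mid X_l,\,l\in K\setminus\{i\}]=0$ by degeneracy, and the diagonal part and $R$ each have mean $1/2$, consistent with $\E[(W'-W)^2]=2p/n$.

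Nevertheless, as a proof your text has a genuine gap exactly where the work lies. The central step --- the term-by-term domination of the off-diagonal part $T$ (and of the covariances involving the resampled terms) by the non-Gaussian configurations in $\E[W^4]$, together with the bound of all residual terms by $\kappa_p\rho_n^2$ --- is only described, not performed, and the appeal to \cite{DP17} for ``the moment estimates for their products'' is circular in this context, since the statement being proved \emph{is} that paper's Lemma 2.11. Moreover, the one tool you name for the hard terms, Cauchy--Schwarz on products of overlapping Hoeffding components, is not by itself enough: it produces mixed fourth moments such as $\E[W_J^2W_K^2]$, which are not controlled by $\sigma_J^2\sigma_K^2$ or by $\rho_n^2$ without the finer pairing/contraction bookkeeping that de Jong's computation (and Section 2 of \cite{DP17}) actually carries out. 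Your mechanism (iii) is the right one for second-moment discrepancies --- indeed $\sum_{J\cap K\neq\emptyset}\sigma_J^2\sigma_K^2\leq p\,\rho_n^2$ by regrouping along a shared index --- but the configurations with three or four distinct index sets still require the detailed enumeration you defer; until that is supplied, the inequality, and in particular the existence of a constant $\kappa_p$ depending only on $p$, is asserted rather than proved.
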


\begin{lemma}[Lemma 2.12 of \cite{DP17}]\label{le2} 
 For the above constructed exchangeable pair we have the bound
 \[\frac{n}{4p}\E\bigl[(W'-W)^4\bigr]\leq 2\bigl(\E[W^4]-3\bigr)+3\kappa_p \rho_n^2\,,\]
where $\kappa_p$ is the same as in Lemma \ref{le1}.
\end{lemma}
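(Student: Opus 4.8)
The plan is not to expand $\E[(W'-W)^4]$ combinatorially from the outset, but to first exploit exchangeability and the linear regression property \eqref{linreg} to reduce the fourth-power estimate to a covariance bound that runs exactly parallel to Lemma \ref{le1}. Write $\lambda:=p/n$ and set $S:=\frac{n}{2p}\E\bigl[(W'-W)^2\mid X\bigr]$, so that Lemma \ref{le1} reads $\Var(S)\leq\E[W^4]-3+\kappa_p\rho_n^2$; moreover, since $\E[WW']=(1-\lambda)\E[W^2]$ by \eqref{linreg} and $\E[W^2]=1$, we have $\E[(W'-W)^2]=2\lambda$ and hence $\E[S]=1$.

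The first step is the algebraic identity
\[\frac{n}{4p}\,\E\bigl[(W'-W)^4\bigr]=3\,\E[W^2S]-\E[W^4].\]
Expanding $(W'-W)^4$ and using exchangeability to identify $\E[(W')^4]=\E[W^4]$ and $\E[(W')^3W]=\E[W'W^3]$ gives $\E[(W'-W)^4]=2\E[W^4]-8\E[W^3W']+6\E[W^2(W')^2]$. The regression property \eqref{linreg} yields $\E[W'\mid X]=(1-\lambda)W$, whence $\E[W^3W']=(1-\lambda)\E[W^4]$, while writing $(W')^2=(W'-W)^2+2W(W'-W)+W^2$ and conditioning on $X$ gives $\E[W^2(W')^2]=(1-2\lambda)\E[W^4]+2\lambda\,\E[W^2S]$. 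Substituting, the coefficient of $\E[W^4]$ collapses to $-4\lambda$, and dividing by $4\lambda$ produces the displayed identity.

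With this identity, the conclusion of Lemma \ref{le2} is equivalent to $\E[W^2S]\leq\E[W^4]-2+\kappa_p\rho_n^2$, and since $\E[W^2]=\E[S]=1$ this is in turn equivalent to the covariance estimate
\[\Cov(W^2,S)\leq\E[W^4]-3+\kappa_p\rho_n^2.\]
This is precisely Lemma \ref{le1} with $\Var(S)$ replaced by $\Cov(W^2,S)$, which explains transparently why the same combinatorial constant $\kappa_p$ governs both statements.

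The final and hardest step is to prove this covariance bound. It is worth stressing that it does \emph{not} follow from Lemma \ref{le1} via Cauchy--Schwarz: the latter only yields $\Cov(W^2,S)\leq\sqrt{(\E[W^4]-1)\Var(S)}$, which is of order $\sqrt{\E[W^4]-3+\kappa_p\rho_n^2}$ and is therefore too large precisely when the right-hand side is small, as it must be near the Gaussian regime, where the inequality is in fact an equality with both sides vanishing. Instead one expands $\Cov(W^2,S)$ in the Hoeffding components. Writing $W^{(i)}$ (resp. $W_J^{(i)}$) for the variable obtained from $W$ (resp. $W_J$) by replacing $X_i$ with an independent copy $Y_i$, the construction gives $\E[(W'-W)^2\mid X]=\frac1n\sum_{i=1}^n\E[(W^{(i)}-W)^2\mid X]$ with $W^{(i)}-W=\sum_{J\ni i}(W_J^{(i)}-W_J)$; integrating out $Y_i$ and invoking the degeneracy property (b), which forces $\E[W_J\mid X_k,\,k\in J\setminus\{i\}]=0$, the single-kernel cross terms cancel, so $S$ splits into a diagonal part $\frac1{2p}\sum_{J,K}\abs{J\cap K}\,W_JW_K$ and a contraction part built from $\E[W_J^{(i)}W_K^{(i)}\mid X]$ with $i\in J\cap K$. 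One then organizes the resulting expansions of $\E[W^2W_JW_K]$ and of the contraction contributions according to the intersection pattern of the participating $p$-sets: the fully paired patterns reproduce the combinatorial content of $\E[W^4]-3$, while the partially degenerate patterns are estimated by Cauchy--Schwarz at the level of the kernels and collected into $\kappa_p\rho_n^2$ through the maximal-influence bound $\sum_{J\ni i}\sigma_J^2\leq\rho_n^2$. The main obstacle is exactly this bookkeeping --- enumerating the intersection patterns of the relevant tuples of $p$-sets, matching the leading term to $\E[W^4]$ with the correct constant, and dominating every remainder by $\rho_n^2$ with a single constant $\kappa_p$ identical to that of Lemma \ref{le1}. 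This is the same machinery that underlies the proof of Lemma \ref{le1}, which I would reuse essentially verbatim.
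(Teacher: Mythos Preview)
The paper does not prove Lemma~\ref{le2} at all; it is merely quoted from \cite{DP17}. So there is no in-paper proof to compare against directly. That said, your outline is correct and in fact reconstructs the route taken in \cite{DP17}. Your first step---the exact identity
\[
\frac{n}{4p}\,\E\bigl[(W'-W)^4\bigr]=3\,\E[W^2S]-\E[W^4],\qquad S:=\frac{n}{2p}\E\bigl[(W'-W)^2\mid X\bigr],
\]
---is precisely \cite[Lemma 2.2]{DP17}, which the present paper itself invokes in the proof of Lemma~\ref{le3} (there written as an inequality and with conditioning on $W$, which is the same quantity by the tower property). Your reduction of Lemma~\ref{le2} to the covariance bound $\Cov(W^2,S)\leq\E[W^4]-3+\kappa_p\rho_n^2$ is also correct arithmetically.

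One cautionary remark on the final step. From \cite[Lemma~2.7]{DP17} (used in the proof of Lemma~\ref{le3} here) one has, with $W^2=\sum_M U_M$ the Hoeffding decomposition of $W^2$,
\[
\Cov(W^2,S)=\sum_{1\le|M|\le 2p-1}\frac{2p-|M|}{2p}\,\Var(U_M)
\quad\text{and}\quad
\Var(S)=\sum_{1\le|M|\le 2p-1}\Bigl(\frac{2p-|M|}{2p}\Bigr)^{2}\Var(U_M),
\]
so in fact $\Cov(W^2,S)\geq\Var(S)$. Hence your phrase ``reuse essentially verbatim'' should not be read as deducing the covariance bound \emph{from} Lemma~\ref{le1}; rather, one re-runs the same combinatorial analysis of the $\Var(U_M)$ that underlies Lemma~\ref{le1} (this is exactly how \cite{DP17} obtains Lemma~2.12 alongside Lemma~2.11, with the same $\kappa_p$). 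With that understanding, your sketch is sound.
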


We will further make use of the next result, which we derive from \cite[Lemmas 2.2 and 2.7]{DP17}.
\begin{lemma}\label{le3}
For the above constructed exchangeable pair we have 
 \begin{align*}
\E\Bigl[W^2\frac{n}{2p}\E\bigl[(W'-W)^2\,\bigl|\,W\bigr]\Bigr]&\leq\E\bigl[W^4\bigr],\\
\frac{n}{4p}\E\bigl[(W'-W)^4\bigr]&\leq2\E\bigl[W^4\bigr].
\end{align*}
\end{lemma}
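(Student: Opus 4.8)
The plan is to derive both bounds directly from the defining properties of the exchangeable pair $(W,W')$ --- exchangeability, Stein's linear regression identity \eqref{linreg}, and the hypothesis $W\in L^4(\P)$ (which also gives $W'\in L^4(\P)$ with $\E[W'^4]=\E[W^4]$, as $W'$ has the same law as $W$) --- instead of unpacking the Hoeffding-component identities from \cite{DP17}. The only elementary device needed is the antisymmetrization identity: for $g\colon\R\to\R$ with $g(W)(W'-W)\in L^1(\P)$, exchangeability gives $\E[g(W)(W'-W)]=-\E[g(W')(W'-W)]$, and hence $\E[g(W)(W'-W)]=-\tfrac12\E[(g(W')-g(W))(W'-W)]$.

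For the first bound I would first note that, since $W^2$ is $\sigma(W)$-measurable, the tower property yields $\E\bigl[W^2\tfrac{n}{2p}\E[(W'-W)^2\mid W]\bigr]=\tfrac{n}{2p}\E[W^2(W'-W)^2]$, so it suffices to show $\E[W^2(W'-W)^2]\le\tfrac{2p}{n}\E[W^4]$. Applying the antisymmetrization identity with $g(w)=w^3$ (legitimate since $W^3\in L^{4/3}(\P)$ and $W'-W\in L^4(\P)$), then using $W'^3-W^3=(W'-W)(W'^2+W'W+W^2)$ together with \eqref{linreg} for the left-hand side, I would obtain
\[\tfrac{2p}{n}\E[W^4]=-2\,\E\bigl[W^3(W'-W)\bigr]=\E\bigl[(W'-W)^2\bigl(W^2+W'W+W'^2\bigr)\bigr].\]
Writing $W^2+W'W+W'^2=W^2+W'(W+W')$ and using exchangeability to see that $\E[(W'-W)^2W'(W+W')]=\E[(W'-W)^2W(W+W')]$, the cross term equals $\tfrac12\E[(W'-W)^2(W+W')^2]\ge 0$; dropping it gives $\tfrac{2p}{n}\E[W^4]\ge\E[W^2(W'-W)^2]$, which is the first claim.

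The second bound then falls out of the first via the pointwise inequality $(a-b)^2\le 2(a^2+b^2)$. Indeed, $(W'-W)^4=(W'-W)^2\cdot(W'-W)^2\le 2(W^2+W'^2)(W'-W)^2$, and exchangeability gives $\E[W'^2(W'-W)^2]=\E[W^2(W'-W)^2]$, so $\E[(W'-W)^4]\le 4\,\E[W^2(W'-W)^2]\le 8\tfrac{p}{n}\E[W^4]$ by the first bound, i.e. $\tfrac{n}{4p}\E[(W'-W)^4]\le 2\E[W^4]$.

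I do not expect a genuine obstacle here, as the lemma is a short consequence of the exchangeable-pair machinery already set up. The only points that require care are the routine integrability checks that justify the antisymmetrization identity and the conditioning step (all supplied by $W,W'\in L^4(\P)$) and keeping the signs and the factor $2p/n$ straight in the identity for $\E[W^4]$. The role of the lemma is to supply crude, $\rho_n$-free a priori bounds on the second- and fourth-order terms associated with $(W,W')$, which are what the Berry-Esseen bound for exchangeable pairs from \cite{ShZh} will require.
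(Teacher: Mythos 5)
Your proof is correct, and it takes a genuinely different route from the paper. The paper derives the first inequality by unpacking the Hoeffding structure: it invokes Lemma 2.7 of \cite{DP17}, which expresses the Hoeffding decomposition of $\tfrac{n}{2p}\E[(W'-W)^2\,|\,X]$ as $\sum_{|M|\le 2p-1}\tfrac{2p-|M|}{2p}U_M$ in terms of the Hoeffding components $U_M$ of $W^2$, and then uses orthogonality of these components; the second inequality then follows from the identity of Lemma 2.2 of \cite{DP17}, namely $\tfrac{n}{4p}\E[(W'-W)^4]\le 3\E\bigl[W^2\tfrac{n}{2p}\E[(W'-W)^2\,|\,W]\bigr]-\E[W^4]$, combined with the first bound. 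You instead work entirely at the level of the exchangeable pair: antisymmetrization with $g(w)=w^3$ plus the linear regression identity \eqref{linreg} gives $\tfrac{2p}{n}\E[W^4]=\E[(W'-W)^2(W^2+W'W+W'^2)]$, and discarding the nonnegative cross term $\tfrac12\E[(W'-W)^2(W+W')^2]$ yields the first claim; the second follows from $(W'-W)^2\le 2(W^2+W'^2)$ and exchangeability, with the same constant $2$. Your computations check out (the tower step, the sign bookkeeping, and the integrability justifications via $W,W'\in L^4$ are all fine). What your route buys is self-containedness and generality: it uses only exchangeability, the regression property, and $L^4$-integrability, with no reference to degeneracy or Hoeffding components, so it applies verbatim to the auxiliary pairs $(D_i,D_i')$ with regression constant $p/(n+1)$ and makes the paper's remark about homogeneity (no need for $\E[W^2]=1$) automatic. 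What the paper's route buys is that it recycles exact structural identities from \cite{DP17} that are already needed elsewhere (e.g.\ Lemma \ref{le1}), and the explicit Hoeffding decomposition of $\tfrac{n}{2p}\E[(W'-W)^2\,|\,X]$ carries more information than the inequality alone.
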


\begin{proof}
By \cite[Lemma 2.7]{DP17},
letting
\[W^2=\sum_{\substack{M\subseteq[n]:\\ |M|\leq 2p}} U_M\]
denote the Hoeffding decomposition of $W^2$, one has that the Hoeffding decomposition of $\frac{n}{2p}\E\bigl[(W'-W)^2\,\bigl|\,X\bigr]$ is given by 
\[ \frac{n}{2p}\E\bigl[(W'-W)^2\,\bigl|\,X\bigr]=\sum_{\substack{M\subseteq[n]:\\ |M|\leq 2p-1}}\frac{2p-|M|}{2p} U_M.\]
Hence, using the orthogonality of Hoeffding components in $L^2(\P)$, we obtain that 
\begin{align*}
&\E\Bigl[W^2\frac{n}{2p}\E\bigl[(W'-W)^2\,\bigl|\,W\bigr]\Bigr]=\E\Bigl[W^2\frac{n}{2p}\E\bigl[(W'-W)^2\,\bigl|\,X\bigr]\Bigr]\\
&=\sum_{\substack{M\subseteq[n]:\\ |M|\leq 2p-1}}\frac{2p-|M|}{2p}\E[U_M^2]\leq\E[W^2]^2+\frac{2p-1}{2p}\sum_{\substack{M\subseteq[n]:\\1\leq |M|\leq 2p-1}}\Var(U_M)\\
&\leq \E[W^2]^2+\sum_{\substack{M\subseteq[n]:\\1\leq |M|\leq 2p}}\Var(U_M)=\E[W^2]^2+\Var(W^2)=\E[W^4],
\end{align*}
where we have used that $U_\emptyset=\E[W^2]$ in the first inequality. This proves the first claim. For the second claim, we just note that by \cite[Lemma 2.2]{DP17} we have that 
\[\frac{n}{4p}\E\bigl[(W'-W)^4\bigr]\leq3\E\Bigl[W^2\frac{n}{2p}\E\bigl[(W'-W)^2\,\bigl|\,W\bigr]\Bigr]-\E\bigl[W^4\bigr]\]
and apply the bound just proven.
 \end{proof}

We remark that, by homogeneity, Lemma \ref{le3} continues to hold when $\Var(W)=\E[W^2]\not=1$. We are now ready to prove our main result.

\begin{proof}[Proof of Theorem \ref{maintheo}]
In view of \eqref{linreg}, by \cite[Theorem 2.1]{ShZh} we have the bound 
\begin{align}\label{pr1}
\sup_{t\in\R}\babs{\P(W\leq t)-\Phi(t)}&\leq \E\Babs{1-\frac{n}{2p}\E\bigl[(W'-W)^2\,\bigl|\,W\bigr]}\notag\\
&\;+\frac{n}{p}\E\Babs{\E\bigl[|W'-W|(W'-W)\,\bigl|\,W\bigr]}.
\end{align}
Since 
\[\E[(W'-W)^2]=\frac{2p}{n},\]
for the first term on the right hand side of \eqref{pr1}, from Lemma \ref{le1} we see that 
\begin{align}\label{pr2}
&\E\Babs{1-\frac{n}{2p}\E\bigl[(W'-W)^2\,\bigl|\,W\bigr]}\leq\biggl(\Var\Bigl(\frac{n}{2p}\E\bigl[(W'-W)^2\,\bigl|\,W\bigr]\Bigr)\biggr)^{1/2}\notag\\
&\leq \biggl(\Var\Bigl(\frac{n}{2p}\E\bigl[(W'-W)^2\,\bigl|\,X\bigr]\Bigr)\biggr)^{1/2}
\leq \sqrt{|\E\bigl[W^4\bigr]-3|}+\sqrt{\kappa_p}\rho_n,
\end{align}
where we have used the inequality $\Var(\E[T|\G])\leq \Var(\E[T|\A])$ for sub-$\sigma$-fields $\G\subseteq\A$ of $\F$ and $T\in L^2(\P)$.

To deal with the second term, we first introduce some useful notation. Let $\theta:\R\rightarrow\R$ be the function given by $\theta(x):=|x|x$. Moreover, for a random variable 
$T=t(X_1,\dotsc,X_n)$ and $i,j\in[n]$ with $i\not=j$ we let 
\[T^{(i)}:=t(X_1,\dotsc,X_{i-1},Y_i,X_{i+1},\dotsc,X_n)\]
and
\[T^{(i,j)}:=(T^{(i)})^{(j)}:=(T^{(j)})^{(i)}:=t(X_1,\dotsc,X_{i-1},Y_i,X_{i+1},\dotsc,X_{j-1},Y_j,X_{j+1},\dotsc,X_n),\]
that is, we replace the respective components of the vector $X$ with those from $Y$ in the argument of the function $t$. Furthermore, using this notation, we let 
\[D_i:=W^{(i)}-W=\sum_{J\in\D_p(n)}\bigl(W_J^{(i)}-W_J\bigr)=\sum_{\substack{J\in\D_p(n):\\ i\in J}}\bigl(W_J^{(i)}-W_J\bigr),\quad i\in[n].\]
With this notation at hand, using independence of $\alpha, X$ and $Y$, we have 
\begin{align*}
&n\E\bigl[|W'-W|(W'-W)\,\bigl|\,X\bigr]=\sum_{i=1}^n\E\bigl[|D_i| D_i\,\bigl|\, X\bigr]= \sum_{i=1}^n\E\bigl[\theta(D_i)\,\bigl|\, X\bigr]
\end{align*}
and since each $\theta(D_i)$ has a symmetric distribution and, hence, $\E[\theta(D_i)]=0$, it follows that 
\begin{align}\label{pr3}
&n\E\Babs{\E\bigl[|W'-W|(W'-W)\,\bigl|\,W\bigr]}\leq n\E\Babs{\E\bigl[|W'-W|(W'-W)\,\bigl|\,X\bigr]}\notag\\
&=\E\Babs{\sum_{i=1}^n\E\bigl[\theta(D_i)\,\bigl|\, X\bigr]}\leq\Biggl(\Var\biggl(\sum_{i=1}^n\E\bigl[\theta(D_i)\,\bigl|\, X\bigr]\biggr)\Biggr)^{1/2}\notag\\
&=\Biggl(\sum_{i=1}^n\Var\Bigl(\E\bigl[\theta(D_i)\,\bigl|\, X\bigr]\Bigr)+\sum_{i\not=j}\Cov\Bigl(\E\bigl[\theta(D_i)\,\bigl|\, X\bigr], \E\bigl[\theta(D_j)\,\bigl|\, X\bigr]\Bigr)\Biggr)^{1/2}.
\end{align}
For the sum of variances, using Lemma \ref{le2} as well as $\E[\theta(D_i)]=0$ and the conditional Jensen inequality we have that 
\begin{align}\label{pr4}
&\sum_{i=1}^n\Var\Bigl(\E\bigl[\theta(D_i)\,\bigl|\, X\bigr]\Bigr)=\sum_{i=1}^n\E\biggl[\Bigl(\E\bigl[\theta(D_i)\,\bigl|\, X\bigr]\Bigr)^2\biggr]
\leq\sum_{i=1}^n\E\Bigl[\bigl(\theta(D_i)\bigr)^2\Bigr]\notag\\
&=\sum_{i=1}^n\E\bigl[D_i^4\bigr]=n\E\bigl[(W'-W)^4\bigr]
\leq 8p\bigl(\E[W^4]-3\bigr)+12p\kappa_p \rho_n^2.
\end{align}
In order to deal with the sum of covariances, first note that, by the total covariance law, we have for $i\not=j$ that
\begin{align*}
&\Cov\bigl(\theta(D_i),\theta(D_j)\bigr)\\
&=\Cov\Bigl(\E\bigl[\theta(D_i)\,\bigl|\, X\bigr], \E\bigl[\theta(D_j)\,\bigl|\, X\bigr]\Bigr)
+\E\Bigl[\Cov\bigl(\theta(D_i),\theta(D_j)\,\bigl|\,X\bigr)\Bigr]\\
&=\Cov\Bigl(\E\bigl[\theta(D_i)\,\bigl|\, X\bigr], \E\bigl[\theta(D_j)\,\bigl|\, X\bigr]\Bigr),
\end{align*}
since, given $X$, $\theta(D_i)$ is a (measurable) function of $Y_i$, whereas $\theta(D_j)$ is a function of $Y_j$ and, hence, the two are conditionally independent given $X$. In particular, 
\[\Cov\bigl(\theta(D_i),\theta(D_j)\,\bigl|\,X\bigr)=0\quad\P\text{-a.s.}\]
Hence, using again that $\E[\theta(D_i)]=0$, we have 
\begin{align}\label{pr5}
&\Cov\Bigl(\E\bigl[\theta(D_i)\,\bigl|\, X\bigr], \E\bigl[\theta(D_j)\,\bigl|\, X\bigr]\Bigr)=\Cov\bigl(\theta(D_i),\theta(D_j)\bigr)=\E\bigl[\theta(D_i)\theta(D_j)\bigr]
\end{align}
and we further make the fundamental claim that for the latter term the identity
\begin{align}\label{pr11}
\E\bigl[\theta(D_i)\theta(D_j)\bigr]
&=\frac{1}{4}\E\Bigl[\bigr(\theta(D_i^{(j)})-\theta(D_i)\bigr)\cdot\bigr(\theta(D_j^{(i)})-\theta(D_j)\bigr)\Bigr]
\end{align}
holds true. This identity is one of the main observations for our proof to succeed. 
In order to prove it, we make sure that
\begin{align*}
\E\Bigl[\theta(D_i^{(j)})\theta(D_j)\Bigr]=-\E\bigl[\theta(D_i)\theta(D_j)\bigr]=-\E\Bigl[\theta(D_i^{(j)})\theta(D_j^{(i)})\Bigr].
\end{align*}
These identities follow from independence and (anti-)symmetry by interchanging, respectively, the identically distributed variables $Y_j$ and $X_j$ in the expectation
\begin{align*}
&\E\Bigl[\theta(D_i^{(j)})\theta(D_j)\Bigr]=\E\Bigl[\bigl(W^{(i,j)}-W^{(j)}\bigr)\babs{W^{(i,j)}-W^{(j)}}\bigl(W^{(j)}-W\bigr)\babs{W^{(j)}-W}\Bigr]\\
&=\E\Bigl[\bigl(W^{(i)}-W\bigr)\babs{W^{(i)}-W}\bigl(W-W^{(j)}\bigr)\babs{W^{(j)}-W}\Bigr]\\
&=-\E\bigl[\theta(D_i)\theta(D_j)\bigr]
\end{align*}
and the identically distributed pairs $(Y_i,Y_j)$ and $(X_i,X_j)$ in the expectation
\begin{align*}
&\E\Bigl[\theta(D_i^{(j)})\theta(D_j^{(i)})\Bigr]\\
&=\E\Bigl[\bigl(W^{(i,j)}-W^{(j)}\bigr)\babs{W^{(i,j)}-W^{(j)}}\bigl(W^{(j,i)}-W^{(i)}\bigr)\babs{W^{(j,i)}-W^{(i)}}\Bigr]\\
&=\E\Bigl[\bigl(W-W^{(i)}\bigr)\babs{W-W^{(i)}}\bigl(W-W^{(j)}\bigr)\babs{W^{(j)}-W}\Bigr]\\
&=\E\bigl[\theta(D_i)\theta(D_j)\bigr].
\end{align*}
Now, as observed in display (4.15) in \cite{PTh} for instance, by using a Taylor argument, one has  
\[\bigl(\theta(y)-\theta(x)\bigr)^2\leq 8x^2(y-x)^2+2(y-x)^4,\quad x,y\in\R,\]
so that \eqref{pr5}, \eqref{pr11} and the inequality $|ab|\leq a^2/2+b^2/2$ imply that 
\begin{align}\label{pr6}
&\Cov\Bigl(\E\bigl[\theta(D_i)\,\bigl|\, X\bigr], \E\bigl[\theta(D_j)\,\bigl|\, X\bigr]\Bigr)\notag\\
&\leq\frac18\E\Bigl[\bigr(\theta(D_i^{(j)})-\theta(D_i)\bigr)^2\Bigr]+\frac18\E\Bigl[\bigr(\theta(D_j^{(i)})-\theta(D_j)\bigr)^2\Bigr]\notag\\
&\leq\E\Bigl[D_i^2\bigl(D_i^{(j)}-D_i\bigr)^2\Bigr]+\frac14\E\Bigl[\bigl(D_i^{(j)}-D_i\bigr)^4\Bigr]\notag\\
&\;+\E\Bigl[D_j^2\bigl(D_j^{(i)}-D_j\bigr)^2\Bigr]+\frac14\E\Bigl[\bigl(D_j^{(i)}-D_j\bigr)^4\Bigr].
\end{align}
To proceed from here, we make the next important observation that, for fixed $i\in[n]$, the random variable $D_i$ is again a completely degenerate $U$-statistic of order $p$, this time based on the $n+1$ independent random variables $X_1,\dotsc,X_n,X_{n+1}:=Y_i$. Indeed, using degeneracy, we see that
\[D_i=\sum_{\substack{J\in\D_p(n):\\ i\in J}}\bigl(W_J^{(i)}-W_J\bigr)\]
is the Hoeffding decomposition of $D_i$, from which we read off that the Hoeffding component of $D_i$ belonging to a $p$-subset $J$ of $[n]$ is given by $-W_J$, whereas the Hoeffding component belonging to a $p$-subset $K$ of $[n+1]$ with $n+1\in K$ is given by $W^{(i)}_{(K\setminus\{n+1\})\cup\{i\}}$, if $i\notin K$ and equals $0$, if $i\in K$.

Hence, by considering an independent copy $(Y_1,\dotsc,Y_n,Z_{n+1})$ of $(X_1,\dotsc,X_n,X_{n+1})$  (for which only another copy $Z_{n+1}$ of $X_{n+1}=Y_i$ must be additionally chosen) and an independent, uniformly distributed index $\beta$ with values in $[n+1]$, in the same way as for $W$ itself, we construct an exchangeable pair $(D_i,D_i')$ which, by \eqref{linreg}, satisfies 
\[\E\bigl[D_i'-D_i\bigl|D_i\bigr]=-\frac{p}{n+1}D_i.\]
Thus, applying first the second bound in Lemma \ref{le3} to the pairs $(D_i,D_i')$, then the definition of $D_i$ and finally Lemma \ref{le2}, we obtain that 
\begin{align}\label{pr7}
&\sum_{\substack{1\leq i,j\leq n:\\i\not=j}}\E\Bigl[\bigl(D_i^{(j)}-D_i\bigr)^4\Bigr]
=\sum_{i=1}^n\Biggl(\sum_{\substack{1\leq j\leq n:\\ j\not=i}}\E\Bigl[\bigl(D_i^{(j)}-D_i\bigr)^4\Bigr]\Biggr)\notag\\
&\leq(n+1)\sum_{i=1}^n \E\Bigl[\bigl(D_i'-D_i\bigr)^4\Bigr]\leq 8p\sum_{i=1}^n \E\bigl[D_i^4\bigr]=8pn\E\Bigl[\bigl(W'-W\bigr)^4\Bigr]\notag\\
&\leq  64p^2\bigl(\E[W^4]-3\bigr)+96p^2\kappa_p \rho_n^2.
\end{align}
Similarly, using the first bound from Lemma \ref{le3} to the pairs $(D_i,D_i')$ this time instead, we obtain
\begin{align}\label{pr8}
&\sum_{\substack{1\leq i,j\leq n:\\i\not=j}}\E\Bigl[D_i^2\bigl(D_i^{(j)}-D_i\bigr)^2\Bigr]=\sum_{i=1}^n\Biggl(\sum_{\substack{1\leq j\leq n:\\ j\not=i}}\E\biggl[D_i^2\E\Bigl[\bigl(D_i^{(j)}-D_i\bigr)^2\,\bigl|D_i\Bigr]\biggr]\Biggr)\notag\\
&\leq (n+1)\sum_{i=1}^n\E\biggl[D_i^2\E\Bigl[\bigl(D_i'-D_i\bigr)^2\,\bigl|D_i\Bigr]\biggr]
=2p\sum_{i=1}^n\,\E\biggl[D_i^2 \frac{n+1}{2p}\E\Bigl[\bigl(D_i'-D_i\bigr)^2\,\bigl|D_i\Bigr]\biggr]\notag\\
&\leq2p \sum_{i=1}^n \E\bigl[D_i^4\bigr]=2pn\E\Bigl[\bigl(W'-W\bigr)^4\Bigr]
\leq  16p^2\bigl(\E[W^4]-3\bigr)+24p^2\kappa_p \rho_n^2.
\end{align}
Thus, \eqref{pr6}-\eqref{pr8} together imply that 
\begin{align}\label{pr9}
&\sum_{\substack{1\leq i,j\leq n:\\i\not=j}}\Cov\Bigl(\E\bigl[\theta(D_i)\,\bigl|\, X\bigr], \E\bigl[\theta(D_j)\,\bigl|\, X\bigr]\Bigr)
\leq 64p^2\bigl(\E[W^4]-3\bigr)+96p^2\kappa_p \rho_n^2
\end{align}
so that \eqref{pr3}, \eqref{pr4} and \eqref{pr9} yield that 
\begin{align}\label{pr10}
&\frac{n}{p}\E\Babs{\E\bigl[|W'-W|(W'-W)\,\bigl|\,W\bigr]}\notag\\
&\leq\frac{1}{p}\Bigl((8p+64p^2)\bigl(\E[W^4]-3\bigr)+(12p+96p^2)\kappa_p \rho_n^2\Bigr)^{1/2}\notag\\
&\leq \bigl(2\sqrt{2}p^{-1/2}+8\bigr)\sqrt{|\E[W^4]-3|}+\bigl(2\sqrt{3}p^{-1/2}+4\sqrt{6}\sqrt{\kappa_p}\bigr)\rho_n\notag\\
&\leq \bigl(2\sqrt{2}+8\bigr)\sqrt{|\E[W^4]-3|}+\bigl(2\sqrt{3}+4\sqrt{6}\sqrt{\kappa_p}\bigr)\rho_n .
\end{align}
Theorem \ref{maintheo} now follows from \eqref{pr1}, \eqref{pr2} and \eqref{pr10} by minor simplifications.
\end{proof}

\normalem
\bibliography{bedj}{}
\bibliographystyle{alpha}
\end{document}